\newtheorem{theorem}{Theorem}[section]
\newtheorem{lemma}[theorem]{Lemma}
\newtheorem{proposition}[theorem]{Proposition}
\newtheorem{corollary}[theorem]{Corollary}
\theoremstyle{definition} 
\newtheorem{definition}[theorem]{Definition}
\newtheorem{example}[theorem]{Example}
\newtheorem{notation}[theorem]{Notation}
\newtheorem{remark}[theorem]{Remark}
\numberwithin{equation}{section} 
\def\moverlay{\mathpalette\mov@rlay}
\def\mov@rlay#1#2{\leavevmode\vtop{%
   \baselineskip\z@skip \lineskiplimit-\maxdimen
   \ialign{\hfil$\m@th#1##$\hfil\cr#2\crcr}}}
\newcommand{\charfusion}[3][\mathord]{
    #1{\ifx#1\mathop\vphantom{#2}\fi
        \mathpalette\mov@rlay{#2\cr#3}
      }
    \ifx#1\mathop\expandafter\displaylimits\fi}
\begin{document} 
\title{Quasi-ordered Rings}

\author{Simon M\"uller}

\begin{center} {\small{\today}} \end{center}

\begin{abstract}
A quasi-order is a binary, reflexive and transitive relation. In the Journal of Pure and Applied Algebra 45 (1987), S.M. Fakhruddin introduced the notion of (totally) quasi-ordered fields and showed that each such 
field is either an ordered field or else a valued field. Hence, quasi-ordered fields are very well suited to treat ordered and valued fields simultaneously. \\
In this note, we will prove that the same dichotomy holds for commutative rings with $1$ as well. For that purpose we first develop an appropriate notion of (totally) quasi-ordered rings. Our proof of the dichotomy 
then exploits Fakhruddin's result that was mentioned above. \\
Quasi-ordered Rings are a very interesting class on its own. Their investigation is continued in \cite{Kuhl}, where Salma Kuhlmann and the author of this note develop a notion of compatibility between quasi-orders
and valuations, and establish a Baer-Krull Theorem.  
\end{abstract}

\maketitle

\section{Introduction}
\noindent
Let $S$ be a set and $\preceq$ a binary, reflexive, and transitive relation on $S.$ Then $\preceq$ defines an equivalence relation $\sim \: := \: \sim_{\preceq}$ on $S$ by declaring 
$a \sim b :\Leftrightarrow a \preceq b \textrm{ and } b \preceq a.$ Here $\sim$ always denotes this equivalence relation and we write $a \prec b$ for $a \preceq b$ and $b \npreceq a.$ \\
In his note \cite{Fakh}, Fakhruddin gave the following definition of quasi-ordered fields:

\begin{definition} \label{qofield}
Let $K$ be a field and $\preceq$ a binary, reflexive, transitive and total relation on $K.$ Then $(K,\preceq)$ is called a \textbf{quasi-ordered field} if $\forall x,y,z \in K:$
\begin{itemize}
\item[(Q1)] $x \sim 0 \Rightarrow x = 0,$
\item[(Q2)] $x \preceq y, \; 0 \preceq z \Rightarrow xz \preceq yz,$
\item[(Q3)] $x \preceq y, \; z \nsim y \Rightarrow x+z \preceq y+z.$
\end{itemize}
\end{definition}

\noindent
Quasi-ordered fields unify the classes of ordered and valued fields, as Fakhruddin's main theorem states (see \cite[Theorem 2.1]{Fakh}):

\begin{theorem} \label{qofielddicho} A quasi-ordered field $(K,\preceq)$ is either an ordered field or else a valued field $(K,v)$ such that $x \preceq y$ if and only if $v(y) \leq v(x)$ for all $x,y \in K.$
\end{theorem}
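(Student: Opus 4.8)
The plan is to extract two facts from the axioms, then split according to whether $-1 \prec 0$ or $0 \prec -1$ (exactly one holds, since $-1 \neq 0$ forces $-1 \nsim 0$ by (Q1)). The preliminaries are: for $x \neq 0$ one has $0 \prec x^{2}$, in particular $0 \prec 1$ — indeed, if $0 \preceq x$ apply (Q2) to $0 \preceq x$ with multiplier $x$, while if $x \preceq 0$ first add $-x$ via (Q3) (admissible since $-x \nsim 0$) to get $0 \preceq -x$ and then square, strictness being (Q1) — and, writing $P := \{z : 0 \preceq z\}$, totality together with the observation that $x \preceq 0$ implies $0 \preceq -x$ (for $x\neq 0$, add $-x$) gives $P \cup (-P) = K$.

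For the case $-1 \prec 0$, I would show $(K,\preceq)$ is an ordered field with positive cone $P$. The pivotal step is that $\sim$ is the identity relation. First, $x \nsim -x$ for all $x \neq 0$: otherwise, multiplying $x \sim -x$ by the square $(x^{-1})^{2} \succ 0$ through (Q2) yields $1 \sim -1$, hence $0 \prec -1$ (using $0 \prec 1$), contradicting the hypothesis. Then, if $a \sim b$ with $a \neq b$, we have $b \neq 0$, so $b \nsim -b$, and applying (Q3) to $a \preceq b$ and to $b \preceq a$ with $c = -b$ gives $0 \preceq a - b \preceq 0$, whence $a - b \sim 0$, i.e.\ $a = b$ by (Q1). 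So $\preceq$ is a total order. Now $P \cdot P \subseteq P$ is immediate from (Q2); $P \cap (-P) = \{0\}$ follows because for $z \neq 0$ one has $0 \prec z^{2}$, while $0 \preceq z$ and $0 \preceq -z$ would give (via (Q2), then (Q3) using $z^{2} \nsim -z^{2}$) $z^{2} \preceq 0$, absurd; next $0 \prec 2$ (else subtracting $1$ from $2 \preceq 0$ via (Q3) gives $1 \preceq -1$, hence $0 \prec -1$), which yields $P + P \subseteq P$ (for equal summands multiply $0 \preceq x$ by $2$, for unequal ones apply (Q3) directly), and then $\mathrm{char}\,K = 0$. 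Finally the ordered-field axioms follow: (Q2) gives multiplicative compatibility and (Q3) additive compatibility, the sole degenerate instance $z = y$ of (Q3) being handled by a brief case distinction.

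For the case $0 \prec -1$, I would reconstruct a valuation. Since here $0 \prec \pm 1$, multiplication by $-1 \succeq 0$ in (Q2) shows $x \preceq y \Leftrightarrow -x \preceq -y$, hence $x \sim -x$ for every $x$; and then $0 \preceq x$ for all $x$, because $x \preceq 0$ would give $0 \preceq -x \sim x$, forcing $x \sim 0$, i.e.\ $x = 0$. With $0$ below everything, (Q2) becomes unconditional, which makes $\sim$ a congruence for multiplication on $K^{\times}$. Put $\Gamma := K^{\times}/\!\sim$, let $v$ be the quotient map extended by $v(0) := \infty$, and order $\Gamma$ by $v(x) \le v(y) :\Leftrightarrow y \preceq x$; using (Q1), (Q2) and totality, $(\Gamma,\cdot,\le)$ is a totally ordered abelian group and $v$ a surjective homomorphism with $v^{-1}(\infty) = \{0\}$. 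What is left is the ultrametric inequality $v(x+y) \ge \min\{v(x),v(y)\}$ — equivalently, that $\mathcal{O} := \{x : x \preceq 1\}$ is closed under addition (it already contains $0,1$, is closed under products and negation, and satisfies $x \in \mathcal{O}$ or $x^{-1} \in \mathcal{O}$). Granting this, $\mathcal{O}$ is a valuation ring of $K$, its valuation is $v$, and $x \preceq y \Leftrightarrow v(y) \le v(x)$ holds by construction.

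The hard part is precisely this ultrametric inequality. One gets it when $v(x) \neq v(y)$: if $z \prec x$ then $x + z \preceq x$ — for otherwise $x \prec x+z$, whence $x + z \nsim z \sim -z$, and (Q3) applied to $0 \preceq -z$ with the translate $x+z$ yields $x+z \preceq x$, a contradiction. Rescaling then reduces the remaining equal-valuation case to showing $1 + u \preceq 1$ whenever $u \sim 1$. Here the difficulty bites: (Q3) will not permit adding $\pm 1$ to a quantity $\sim 1$, and in this case $\pm 1$ are themselves $\sim 1$, so no one-line translation works. This last step needs a genuine argument — splitting on the position of $u-1$ relative to $1$ and invoking the multiplicative structure (for instance, if $1 \prec 1+u$ one passes to $w := (1+u)^{-1} \prec 1$ and observes that $w$ and $1-w = uw$ are equivalent and both $\prec 1$ yet sum to $1$, which one must rule out) — and this is where the real work of the theorem lies.
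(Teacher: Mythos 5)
First, note that the paper does not prove Theorem \ref{qofielddicho} at all: it is quoted verbatim from Fakhruddin \cite[Theorem 2.1]{Fakh}, so there is no in-paper proof to compare against. Your reconstruction does follow Fakhruddin's strategy (split on the position of $-1$; positive cone in one case, valuation ring $\mathcal{O}=\{x: x\preceq 1\}$ in the other), and the ordered case and the construction of $\Gamma=K^\times/\!\sim$ are essentially sound (one small slip: multiplying $x\sim -x$ by $(x^{-1})^2$ gives $x^{-1}\sim -x^{-1}$, not $1\sim -1$; to get $1\sim -1$ first deduce $0\preceq x^{-1}$ from $0\preceq x$ and then multiply by $x^{-1}$).

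The genuine gap is the one you flag yourself: you leave the equal-valuation instance of the ultrametric inequality ($u\sim 1\Rightarrow 1+u\preceq 1$) unproven, offering only a sketch via $w=(1+u)^{-1}$ that, as written, is circular (ruling out ``$w$ and $uw$ are both $\prec 1$ yet sum to $1$'' is exactly the statement being proved). As submitted, the proof is therefore incomplete at its crux. What you seem not to have noticed is that your own argument for the unequal-valuation case closes this gap verbatim once you orient it correctly: the hypothesis $z\prec x$ in your auxiliary claim can be weakened to $z\preceq x$. Indeed, suppose $z\preceq x$ and, for contradiction, $x\prec x+z$. Then $z\preceq x\prec x+z$, so $x+z\nsim z\sim -z$; applying (Q3) to $0\preceq -z$ with translate $x+z$ (legitimate precisely because the contradiction hypothesis makes $x+z\nsim -z$) yields $x+z\preceq x$, contradicting $x\prec x+z$. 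In particular $u\sim 1$ gives $1+u\preceq 1$ directly. Your perceived obstruction---that (Q3) forbids adding $\pm 1$ to a quantity $\sim 1$---dissolves because in the proof by contradiction one adds $-1$ (or $-u$) to $1+u$, which under the assumption $1\prec 1+u$ is \emph{not} equivalent to $\pm 1$. With that observation inserted, your argument becomes a complete and correct proof.
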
 

\noindent
The aim of this note is to establish the same result for quasi-ordered rings, see Theorem \ref{dicho}, respectively Theorem \ref{main}. In section two we briefly recall valued and ordered rings. The object of the 
third section is to introduce ournotion of quasi-ordered rings, and to show that for such rings the equivalence class of $0$ with respect to $\sim,$ denoted by $E_0,$ is a prime ideal. In the fourth and final 
section, we prove the dichotomy in two steps. At first we show that the quasi-order on $R$ can be extended to a quasi-order on the quotient field $\textrm{Quot}(R/E_0),$ see Proposition \ref{quot}, and apply 
Theorem \ref{qofielddicho}. Afterwards we carry out how this gives rise to a suitable order, respectively a suitable valuation, on $R.$

\section{Ordered and valued rings}

\noindent
For the rest of this section, let $R$ always denote a commutative ring with $1.$ \vspace{2mm}

\noindent
In real algebra it is common to regard order relations as unary relations, i.e. as subsets of the algebraic structure under consideration, and to declare precisely the elements of this subset as the non-negative 
elements. In this sense, orders on rings are identified with positive cones (see for instance \cite[p. 29]{Marsh}):

\begin{definition} \label{cone} A \textbf{positive cone} of $R$ is a subset $P \subset R$ such that the following conditions are satisfied:
\begin{itemize}
\item[(P0)] $P \cup -P = R$ (totality),
\item[(P1)] $\mathfrak{p}:= P \cap -P$ is a prime ideal of $R,$ called the \textbf{support} of $R,$
\item[(P2)] $P \cdot P \subseteq P,$ 
\item[(P3)] $P + P \subseteq P.$
\end{itemize}
\end{definition}

\noindent
The correspondence between positive cones $P \subset R$ and orders $\leq \: \subset R^2$ is then given by $x \leq y \Leftrightarrow y-x \in P.$ However, as the next lemma shows, such unary descriptions of 
order relations can only work if the compatibility of the order with addition is not restricted.

\begin{lemma} Let $T$ be a subset of $R.$ For $x,y \in R$ define $x \leq y :\Leftrightarrow y-x \in T.$ Then $\leq$ already satisfies $\forall x,y,z \in R: x \leq y \Rightarrow x+z \leq y+z.$
\end{lemma}
\begin{proof}
Suppose $x \leq y,$ so $y-x \in T.$ Then also $(y+z)-(x+z) \in T$ for any $z \in R.$ Consequently $x+z \leq y+z.$
\end{proof}

\noindent
Thus, in order to deal with axioms like (Q3), it is necessary to work with orders (binary) instead of positive cones (unary). This motivates: 

\begin{definition} \label{oring} Let $\leq$ be a binary, reflexive, transitive and total relation on $R.$ Then $(R,\leq)$ is called an \textbf{ordered ring} if $\forall x,y,z \in R:$ 
\begin{itemize}
 \item[(O1)] $0 < 1,$
 \item[(O2)] $xy \leq 0 \Rightarrow x \leq 0 \lor y \leq 0,$
 \item[(O3)] $x \leq y, \; 0 \leq z \Rightarrow xz \leq yz,$
 \item[(O4)] $x \leq y \Rightarrow x+z \leq y+z.$
 \end{itemize}
\end{definition}

\noindent
We claim that the Definitions \ref{cone} and \ref{oring} are equivalent. In Lemma \ref{def} and Corollary \ref{def2} below, we only prove that the axioms (O1) and (O2) correspond to (P1), hereby pretending that the 
rest of the proof, which is straightforward, was already carried out. As mentioned in the introduction, let $\sim \: := \: \sim_{\leq}$ be the equivalence relation given by $x \sim y \Leftrightarrow x \leq y$ and 
$y \leq x,$ and let $E_0$ be the equivalence class of $0.$ Note that for ordered rings the sets $E_0$ and $\mathfrak{p} = P \cap -P$ coincide.

\begin{lemma} \label{def} Let $(R,\leq)$ be an ordered ring, except that axiom $\mathrm{(O2)}$ is omitted. Then $\mathrm{(O2)}$ holds if and only if $\mathrm{(O2')}$ $xy \sim 0 \Rightarrow x \sim 0 \lor y \sim 0$ 
holds.
\end{lemma}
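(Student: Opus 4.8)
The plan is to prove the two implications separately; in both directions I argue by contradiction, using repeatedly that $\mathrm{(O4)}$ lets one pass between $a \leq 0$ and $0 \leq -a$, and that in the commutative ring $R$ one has $(-x)y = -(xy)$ and $(-x)(-y) = xy$. I will also use freely that, for $a \leq 0$, one has $a \nsim 0$ iff $0 \not\leq a$, i.e. $a < 0$; and that, by totality, $a \not\leq 0$ forces $0 \leq a$ together with $a \nsim 0$, i.e. $0 < a$.

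\emph{Proof sketch of $\mathrm{(O2)} \Rightarrow \mathrm{(O2')}$.} Suppose $xy \sim 0$ while $x \nsim 0$ and $y \nsim 0$; I derive a contradiction. From $xy \leq 0$ and $\mathrm{(O2)}$ we get $x \leq 0$ or $y \leq 0$, and by the symmetry of $\mathrm{(O2')}$ in $x$ and $y$ we may assume $x \leq 0$. Then $x \nsim 0$ gives $x < 0$, hence $0 \leq -x$ by $\mathrm{(O4)}$ and moreover $-x \not\leq 0$ (otherwise $-x \sim 0$, hence $x \sim 0$). Next, from $0 \leq xy$ we obtain $(-x)y = -(xy) \leq 0$ via $\mathrm{(O4)}$, so $\mathrm{(O2)}$ yields $-x \leq 0$ or $y \leq 0$; since $-x \not\leq 0$, we conclude $y \leq 0$, and as before $y < 0$ and $-y \not\leq 0$. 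Finally $(-x)(-y) = xy \leq 0$, so a third application of $\mathrm{(O2)}$ forces $-x \leq 0$ or $-y \leq 0$, contradicting $-x \not\leq 0$ and $-y \not\leq 0$.

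\emph{Proof sketch of $\mathrm{(O2')} \Rightarrow \mathrm{(O2)}$.} Suppose $xy \leq 0$ while $x \not\leq 0$ and $y \not\leq 0$. By the remark above, $0 < x$ and $0 < y$. Applying $\mathrm{(O3)}$ with $0 \leq x$ and the non-negative multiplier $y$ gives $0 = 0 \cdot y \leq xy$, which together with $xy \leq 0$ yields $xy \sim 0$. Now $\mathrm{(O2')}$ forces $x \sim 0$ or $y \sim 0$, contradicting $x \nsim 0$ and $y \nsim 0$.

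The verifications folded into these arguments — that $\mathrm{(O4)}$ produces $0 \leq -a$ from $a \leq 0$, that the two sign identities hold in $R$, and the $<$ versus $\sim$ bookkeeping under totality — are immediate. The one point that demands a little care is the first implication: one cannot normalise the signs of $x$ and $y$ simultaneously at the outset, since whether $-x \leq 0$ or $0 \leq -x$ holds depends on the sign of $x$. Hence the argument must be arranged as a short chain that first pins down $x < 0$, then deduces $y < 0$, and only afterwards reaches the contradiction; I expect this orchestration to be the (mild) main obstacle. Note that neither $\mathrm{(O1)}$ nor, in the first direction, $\mathrm{(O3)}$ is needed.
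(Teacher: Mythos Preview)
Your proof is correct. The direction $\mathrm{(O2')}\Rightarrow\mathrm{(O2)}$ is essentially identical to the paper's argument. For $\mathrm{(O2)}\Rightarrow\mathrm{(O2')}$, however, the paper takes a shorter route: it \emph{does} normalise the signs of $x$ and $y$ at the outset, contrary to your closing remark that this cannot be done. Concretely, assuming $x\nsim 0$ and $y\nsim 0$, one replaces $x$ by $-x$ if $x<0$ (and likewise for $y$); this is legitimate because $\mathrm{(O4)}$ alone gives $a\sim 0\Leftrightarrow -a\sim 0$, so both the hypotheses $x\nsim 0$, $y\nsim 0$ and the desired conclusion $xy\nsim 0$ are invariant under such sign changes. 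With $0<x$ and $0<y$ in hand, a single application of the contrapositive of $\mathrm{(O2)}$ yields $0<xy$, hence $xy\nsim 0$. Your chain of three applications of $\mathrm{(O2)}$ works perfectly well and avoids the need to justify the sign normalisation, but the ``mild obstacle'' you anticipated is not actually present.
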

\begin{proof} We first prove the only if part. Let $xy \leq 0.$ Assume that $0<x$ and $0<y.$ Then (O3) yields $0 \leq xy$ and therewith $xy \sim 0.$ From the assumption (O2') follows $x \sim 0$ or $y \sim 0,$ a 
contradiction. Hence $x \leq 0$ or $y \leq 0.$ Now suppose that (O2) holds. Show that $x \nsim 0$ and $y \nsim 0$ implies $xy \nsim 0.$ Assume without loss of generality that $0<x$ and $0<y$ (if for instance $x<0,$ 
then $0<-x$ and we continue with $-x$ instead of $x$). Then (O2) yields $0<xy.$ Therefore $xy \nsim 0.$ 
\end{proof}

\noindent
Obviously axiom (P1) implies both (O1) and (O2'), and so by Lemma \ref{def} it also implies (O2). The following corollary states that the converse is also true, i.e. from the axioms of an ordered ring follows
that $\mathfrak{p}$ is a prime ideal.

\begin{corollary} \label{def2} If $(R, \leq)$ is an ordered ring, then $E_0 = \{x \in R: x \sim 0\},$ which is equal to $\mathfrak{p},$ is a prime ideal of $R.$
\begin{proof} Evidently $0 \in E_0$ by reflexivity. Now let $x,y \in E_0.$ From $0 \leq x$ and $0 \leq y$ follows $0 \leq y \leq x+y.$ Transitivity yields $0 \leq x+y.$ Analogously we obtain $x+y \leq 0$ and hence 
$x+y \in E_0.$ Next, let $x \in E_0$ and $y \in R.$ The case $0 \leq y$ is clear. If $y \leq 0,$ then $0 \leq -y.$ Therefore $0 \leq -xy$ and $-xy \leq 0.$ By (O4) we obtain that $xy \in E_0.$ Thus, $E_0$ is an 
ideal. By Lemma \ref{def} and axiom (O1), $E_0$ is a prime ideal of $R.$
\end{proof}
\end{corollary}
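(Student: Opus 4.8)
The plan is to verify directly that $E_0$ is an ideal and then to read off primeness (and properness) from Lemma \ref{def} together with axiom (O1). The identification $E_0 = \mathfrak{p}$ is automatic from the dictionary $x \leq y \Leftrightarrow y-x \in P$: indeed $0 \leq x$ means $x \in P$ and $x \leq 0$ means $x \in -P$, so $x \sim 0$ holds precisely when $x \in P \cap -P = \mathfrak{p}$.

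For the additive part I would first note $0 \in E_0$ by reflexivity. If $x,y \in E_0$, then adding $y$ to $0 \leq x$ via (O4) gives $y \leq x+y$; combined with $0 \leq y$ and transitivity this yields $0 \leq x+y$, and the symmetric computation gives $x+y \leq 0$, so $x+y \in E_0$.

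For absorption, let $x \in E_0$ and $y \in R$. By totality either $0 \leq y$ or $y \leq 0$. If $0 \leq y$, applying (O3) to $0 \leq x$ and to $x \leq 0$ gives $0 \leq xy$ and $xy \leq 0$, whence $xy \in E_0$. If $y \leq 0$, then $0 \leq -y$ and the same argument gives $-xy \in E_0$; one more application of (O4), this time adding $xy$, transfers this to $xy \in E_0$. Hence $E_0$ is an ideal. Finally, by Lemma \ref{def} axiom (O2) is equivalent to (O2'), i.e. to the assertion $xy \in E_0 \Rightarrow x \in E_0 \lor y \in E_0$, and (O1) forces $1 \notin E_0$, so $E_0 \neq R$; therefore $E_0$ is a prime ideal.

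The argument is essentially routine; the only place needing a little care is the absorption step when $y \leq 0$, where (O3) cannot be applied directly with a "negative" multiplier and one must instead pass through $-xy$ and use the unrestricted compatibility (O4) to recover $xy \in E_0$. The work that makes axiom (O2) yield primeness has effectively been front-loaded into Lemma \ref{def}, so here it enters only through that equivalence.
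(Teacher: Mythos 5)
Your proof is correct and follows essentially the same route as the paper: closure under addition via (O4) plus transitivity, absorption via (O3) with the sign flip through $-y$ and then (O4) to return from $-xy$ to $xy$, and primeness imported from Lemma \ref{def} together with (O1). The only addition is your explicit verification that $E_0 = \mathfrak{p}$, which the paper asserts in the surrounding text rather than inside the corollary's proof.
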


\noindent
We conclude this section by recalling ring valuations.

\begin{definition} (see \cite[VI. 3.1]{Bo}) Let $(\Gamma,+,\leq)$ be an ordered abelian group and $\infty$ a symbol such that $\gamma < \infty$ and $\infty = \infty + \infty = \gamma + \infty = \infty + \gamma$ for
all $\gamma \in \Gamma.$ \\
A map $v: R \to \Gamma \cup \{\infty\}$ is called a \textbf{valuation} on $R$ if $\forall x,y \in R:$
\begin{itemize}
 \item[(V1)] $v(0) = \infty,$ 
 \item[(V2)] $v(1) = 0,$
 \item[(V3)] $v(xy) = v(x) + v(y),$
 \item[(V4)] $v(x+y) \geq \min\{v(x),v(y)\}.$
\end{itemize}
We always assume that $\Gamma$ is the group generated by $\{v(x): x \notin v^{-1}(\infty)\}$ and call it the \textbf{value group} of $R.$ The set $\mathfrak{q}:= \mathrm{supp}(v) := v^{-1}(\infty)$ is called the 
\textbf{support} of $v.$
\end{definition}

\noindent
Note that ring valuations are in general not surjective. This is because $v(R-\mathfrak{q})$ is not necessarily closed under additive inverses. However, if $R$ is a field and $x \in R$ with 
$v(x) = \gamma \in \Gamma,$ then it is easy to see that $v(x^{-1}) = -\gamma.$

\begin{lemma} If $v$ is a valuation on $R,$ then its support $\mathfrak{q}$ is a prime ideal of $R.$
\begin{proof} $0 \in \mathfrak{q}$ and $1 \notin \mathfrak{q}$ by the axioms (V1) and (V2), respectively. Now let $x,y \in \mathfrak{q}.$ Via (V4) we obtain $v(x+y) \geq \min\{v(x),v(y)\} = \infty,$ so 
$x+y \in \mathfrak{q}.$ Next, let $x \in \mathfrak{q}$ and $r \in R.$ (V3) implies $v(rx) = v(r)+v(x) = \infty,$ whereby $xr \in \mathfrak{q}.$ Finally, if $xy \in \mathfrak{q},$ again by (V3) it holds 
$\infty = v(xy) = v(x)+v(y),$ thus $x \in \mathfrak{q}$ or $y \in \mathfrak{q}.$
\end{proof}
\end{lemma}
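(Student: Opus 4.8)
The plan is to verify directly that $\mathfrak{q} = v^{-1}(\infty)$ satisfies the definition of a prime ideal, using only the axioms (V1)--(V4) together with the arithmetic of $\Gamma \cup \{\infty\}$. First I would record that $\mathfrak{q}$ is nonempty and proper: axiom (V1) gives $v(0) = \infty$, so $0 \in \mathfrak{q}$, while axiom (V2) gives $v(1) = 0 \neq \infty$, so $1 \notin \mathfrak{q}$; in particular $\mathfrak{q} \neq R$.

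Next I would check that $\mathfrak{q}$ is closed under addition and absorbs multiplication by $R$. For addition, if $x, y \in \mathfrak{q}$ then (V4) yields $v(x+y) \geq \min\{v(x),v(y)\} = \infty$; since $\infty$ is the top element of $\Gamma \cup \{\infty\}$, this forces $v(x+y) = \infty$, i.e. $x+y \in \mathfrak{q}$. For absorption, if $x \in \mathfrak{q}$ and $r \in R$ then (V3) gives $v(rx) = v(r) + v(x) = v(r) + \infty = \infty$ by the stipulated convention on $\infty$, so $rx \in \mathfrak{q}$. Applying this with $r = -1$ shows $\mathfrak{q}$ is closed under negation, so together with $0 \in \mathfrak{q}$ and closure under addition we conclude that $\mathfrak{q}$ is an ideal of $R$.

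Finally, for primality, suppose $xy \in \mathfrak{q}$. Then (V3) gives $v(x) + v(y) = v(xy) = \infty$. The one point genuinely worth noting is that $\Gamma$ is by assumption an ordered abelian group that does not contain $\infty$, so the sum of two elements of $\Gamma$ again lies in $\Gamma$ and is never equal to $\infty$; hence $v(x) + v(y) = \infty$ can occur only if $v(x) = \infty$ or $v(y) = \infty$, that is, $x \in \mathfrak{q}$ or $y \in \mathfrak{q}$. This completes the verification. I do not anticipate a real obstacle: the argument is a routine unwinding of the axioms, the only subtlety being the observation, used for primality, that a sum carried out inside $\Gamma$ cannot produce $\infty$.
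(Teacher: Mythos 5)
Your proposal is correct and follows essentially the same route as the paper's proof: (V1)/(V2) for $0\in\mathfrak q$, $1\notin\mathfrak q$, (V4) for closure under addition, (V3) for absorption and for primality. The only difference is that you spell out the justification for the primality step (a sum of two elements of $\Gamma$ cannot equal $\infty$), which the paper leaves implicit.
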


\noindent
Any valuation $v$ on $R$ defines a quasi-order $\preceq$ on the set $R$ by declaring $x \preceq y$ if and only if $v(y) \leq v(x)$ for $x,y \in R.$ In analogy to the ordered case, the equivalence class $E_0$ of 
$0,$ with respect to $\sim \: := \sim_{\preceq},$ coincides with the support $\mathfrak{q}.$ \vspace{1mm}

\noindent
An immediate but important consequence of the axioms of a valuation above is the next result, which we will later use to get that any valued ring $(R,v)$ is a quasi-ordered ring.

\begin{lemma} \label{min} Let $v: R \to \Gamma \cup \{\infty\}$ be a valuation and $x,y \in R$ with $v(x) \neq v(y).$ Then $v(x+y) = \min\{v(x),v(y)\}.$
\begin{proof}
 As in the field case, see for instance \cite[p.20, (1.3.4)]{Prestel}. 
\end{proof}

\end{lemma}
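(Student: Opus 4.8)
The plan is to mimic the familiar field-theoretic argument for the ultrametric equality, using only the valuation axioms (V1)--(V4) that have already been recorded. Assume without loss of generality that $v(x) < v(y)$; I want to show $v(x+y) = v(x)$. The inequality $v(x+y) \geq \min\{v(x),v(y)\} = v(x)$ is immediate from (V4), so the whole content is the reverse inequality $v(x+y) \leq v(x)$.

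For that, I would write $x = (x+y) + (-y)$ and apply (V4) again to this sum, obtaining $v(x) \geq \min\{v(x+y), v(-y)\}$. The only subtlety not present verbatim in the axioms is the behaviour of $v$ under negation: I need $v(-y) = v(y)$. This follows from (V3) together with $v(1) = 0$ from (V2) and the fact that $(-1)(-1) = 1$, which forces $2v(-1) = 0$ in $\Gamma \cup \{\infty\}$; since $\Gamma$ is a (torsion-free) ordered abelian group and $v(-1) \neq \infty$, we get $v(-1) = 0$ and hence $v(-y) = v(-1) + v(y) = v(y)$. With this in hand, $v(x) \geq \min\{v(x+y), v(y)\}$; since $v(x) < v(y)$ by assumption, the minimum cannot be $v(y)$, so it must be $v(x+y)$, giving $v(x) \geq v(x+y)$, as desired.

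Combining the two inequalities yields $v(x+y) = v(x) = \min\{v(x),v(y)\}$, completing the proof. The case $v(y) < v(x)$ is symmetric (swap the roles of $x$ and $y$), so no separate argument is needed.

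The only place where any care is required is the negation step $v(-1) = 0$, because the definition of a ring valuation only guarantees that $\Gamma$ is an ordered abelian group rather than, say, a group with no $2$-torsion stated explicitly; but ordered abelian groups are automatically torsion-free, so this is not a genuine obstacle. Everything else is a direct transcription of the proof in the field case cited as \cite[p.20, (1.3.4)]{Prestel}, and indeed the author may simply defer to that reference, as the displayed proof does.
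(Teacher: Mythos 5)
Your proof is correct and is exactly the standard argument the paper defers to by citing \cite[p.20, (1.3.4)]{Prestel}: one inequality from (V4), the other from decomposing $x=(x+y)+(-y)$, with the observation that $v(-1)=0$ (hence $v(-y)=v(y)$) being the only point requiring a remark. You also rightly note that nothing in the argument uses multiplicative inverses, so the field-case proof transfers verbatim to ring valuations, which is precisely why the paper can get away with the one-line citation.
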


\section{Quasi-ordered rings}
\noindent
Let $R$ denote a commutative ring with $1.$ \vspace{2mm}

\noindent
The notions of quasi-ordered fields and ordered rings (see the Definitions \ref{qofield} and \ref{oring}) suggest to call $(R,\preceq)$ a quasi-ordered ring, if $\preceq$ is a binary, reflexive, 
total and transitive relation on $R,$ such that the axioms (O1) -- (O3) and (Q3) are satisfied. However, as the following counterexample shows, this is not quite good enough to obtain the desired dichotomy. 
\vspace{2mm}

\noindent
Consider $R = \mathbb{R}[X,Y].$ We define a relation $\preceq'$ on the monomials in $R$ as follows: first declare $0 \prec' r$ for all $r \in R \backslash \{0\}.$ For $f = rX^iY^j$ and 
$g = sX^mY^n \in R\backslash\{0\}$ set

\[
 f \preceq' g : \Leftrightarrow \begin{cases} \textrm{ either } 0 < n \\
\textrm{ or } j=n=0 \textrm{ and } i \leq m
\end{cases}
\]

\noindent
So the relation $\preceq'$ can be described by the chain
\[
 0 \prec' 1 \prec' X \prec' X^2 \prec' X^3 \prec' \ldots \prec' Y \sim XY \sim X^2Y \sim \ldots \sim Y^2 \ldots
\]
and the rule $rf \sim f$ for all monomials $f$ and all $r \in \mathbb{R}\backslash\{0\}.$ Extend $\preceq'$ to the whole of $R$ by declaring for $0 \neq f,g \in R:$
\[
 f \preceq g :\Leftrightarrow \textrm{the} \preceq'\textrm{-largest monomial of } f \textrm{ is smaller or equivalent than the one of } g.
\]

\begin{proposition} \label{prop2} \hspace{7cm}
\begin{enumerate} 
\item $\preceq$ is a reflexive, transitive and total relation on $R$ such that $(R,\preceq)$ satisfies the axioms $\mathrm{(O1) - (O3)}$ and $\mathrm{(Q3)}.$
\item $\preceq$ is neither an order on $R,$ nor given by $f \preceq g \Leftrightarrow v(g) \leq v(f)$ for some valuation $v$ on $R.$
\end{enumerate}
\begin{proof} (1) Clearly $\preceq$ is reflexive, transitive, total and satisfies (O1). To prove (O2), note that $R$ is integer and that there are no $\preceq$-negative elements, so $fg \preceq 0$ yields $f = 0$ or 
$g=0.$ Hence, $f \preceq 0$ or $g \preceq 0.$ For (O3), simply observe that if $m$ is a $\preceq'$-larger monomial than $n,$ then $am$ is a $\preceq'$-larger monomial than $an$ for all $a \in R.$ It 
remains to verify axiom (QR3). So assume that $f \preceq g$ and $h \nsim g.$ The condition $h \nsim g$ particularly ensures, that the largest $\preceq'$-monomial of $g$ is not the additive inverse of the largest 
$\preceq'$-monomial of $h.$ Either the largest $\preceq'$-monomial of $h$ is strictly smaller than the one of $g$ and then $f+h \preceq g+h,$ or it is strictly greater and then $f+h \sim g+h.$ 
\vspace{1mm}

\noindent
(2) $\preceq$ is certainly not an order since $0 \prec -1.$ It is also not induced by some valuation on $R$ since $X\prec X^2$ and $0 \prec Y,$ but $XY \sim X^2Y.$ So if $\preceq$ would come from some valuation 
$v,$ then $2v(X) < v(X)$ and $v(Y) < \infty,$ but $2v(X) + v(Y) = v(X) + v(Y),$ a contradiction.
\end{proof}
\end{proposition}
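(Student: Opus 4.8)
Proof proposal for Proposition~\ref{prop2}.

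The plan is to treat the two parts essentially as bookkeeping on monomials, once the underlying combinatorial rule is stated cleanly. First I would set up notation: for a nonzero $f \in R = \mathbb{R}[X,Y]$, write $\mathrm{lm}(f)$ for its $\preceq'$-largest monomial (well-defined because $\preceq'$ on monomials is, by construction, a total preorder whose equivalence classes are linearly ordered in order type $\omega + \omega$, and any finite set in a linear order has a greatest element; in the presence of ties we may pick any representative, since $f \preceq g$ only depends on the $\preceq'$-class of $\mathrm{lm}(f)$). Then $f \preceq g \Leftrightarrow \mathrm{lm}(f) \preceq' \mathrm{lm}(g)$, with the convention $0 \prec' f$ for all $f \neq 0$, so $0$ is the unique $\preceq$-minimum. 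Reflexivity, transitivity and totality of $\preceq$ then follow immediately from the corresponding properties of $\preceq'$ on monomials, and (O1) holds since $\mathrm{lm}(1) = 1$ and $0 \prec' 1$.

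For (O2): $R$ is an integral domain and $0$ is the only $\preceq$-negative-or-equivalent element (indeed $0 \prec' r$ for every nonzero $r$), so $fg \preceq 0$ forces $fg = 0$, hence $f = 0$ or $g = 0$, and then $f \preceq 0$ or $g \preceq 0$. For (O3), the key observation is multiplicativity of leading monomials: $\mathrm{lm}(af) \sim' a \cdot \mathrm{lm}(f)$ for nonzero $a$, together with the fact that $m \preceq' n$ implies $am \preceq' an$ — this is visible directly from the defining case split, since multiplying by $X^k Y^\ell$ adds $(k,\ell)$ to both exponent vectors and the conditions "$0 < n$" and "$j = n = 0$, $i \le m$" are preserved. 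So from $f \preceq g$ and $0 \preceq z$ (i.e. $z$ arbitrary, or $z = 0$ trivially) we get $\mathrm{lm}(fz) \preceq' \mathrm{lm}(gz)$.

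The main obstacle, and the only genuinely delicate point, is (Q3): given $f \preceq g$ and $h \nsim g$, show $f + h \preceq g + h$. The issue is cancellation of leading terms when adding $h$. The hypothesis $h \nsim g$ means $\mathrm{lm}(h) \nsim' \mathrm{lm}(g)$, so one of them is strictly $\prec'$ the other. If $\mathrm{lm}(h) \prec' \mathrm{lm}(g)$, then since also $\mathrm{lm}(f) \preceq' \mathrm{lm}(g)$, I would argue that $\mathrm{lm}(g + h) \sim' \mathrm{lm}(g)$ (the top $\preceq'$-block of $g$ survives because nothing in $h$ reaches it) and $\mathrm{lm}(f + h) \preceq' \mathrm{lm}(g)$, whence $f + h \preceq g + h$; here one must note that within a single $\preceq'$-equivalence class the coefficients in $\mathbb{R}$ cannot conspire to cancel the whole block to zero while the relation still only sees the class — actually a class like $\{Y, XY, X^2Y, \dots\}$ can have its coefficients sum to zero monomial-by-monomial but not across the class, and in any case if the entire top class cancels one passes to the next class, which is still $\succeq'$ everything in $h$; I would spell this out by induction on the (finite) number of $\preceq'$-classes meeting $\mathrm{supp}(g) \cup \mathrm{supp}(h)$. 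If instead $\mathrm{lm}(g) \prec' \mathrm{lm}(h)$, then $\mathrm{lm}(h)$ dominates both $g$ and $f$ strictly, and the same survival argument gives $\mathrm{lm}(f + h) \sim' \mathrm{lm}(h) \sim' \mathrm{lm}(g + h)$, so $f + h \sim g + h$ and in particular $f + h \preceq g + h$. Finally, part (2) is a short explicit check: $0 \prec -1$ (as $-1 \neq 0$) shows $\preceq$ is not an order in the sense of Definition~\ref{oring} — or more simply, $-1 \sim 1$ would force $-1 \le 0$ and $0 \le -1$, contradicting antisymmetry-type behaviour of a genuine order once $0 < 1$; and $\preceq$ is not valuation-induced because $X \prec X^2$ gives $v(X^2) < v(X)$, i.e.\ $2v(X) < v(X)$, while $0 \prec Y$ gives $v(Y) < \infty$, yet $XY \sim X^2Y$ forces $v(XY) = v(X^2Y)$, i.e.\ $v(X) + v(Y) = 2v(X) + v(Y)$, contradicting $2v(X) < v(X)$.
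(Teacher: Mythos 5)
Your proposal is correct and follows essentially the same route as the paper's proof: reduce everything to the $\preceq'$-leading monomial, observe that $0$ is the unique minimum so (O2) is trivial in the integral domain $\mathbb{R}[X,Y]$, use multiplicativity of leading blocks for (O3), and for (Q3) split on whether the leading block of $h$ lies strictly above or strictly below that of $g$; part (2) is the same pair of explicit counter-calculations. The extra detail you supply on the ``survival of the top block'' under addition fills in what the paper leaves implicit and is the right thing to say. Two small inaccuracies worth fixing: the $\sim'$-classes of monomials are ordered in type $\omega+1$ (the bottom class $\{0\}$, then $\{rX^i : r\neq 0\}$ for $i=0,1,2,\dots$, then a single top class of all $Y$-bearing monomials), not $\omega+\omega$; and your alternative justification that $\preceq$ is not an order via ``$-1\sim 1$ contradicts antisymmetry'' is muddled, since ring orders in Definition~\ref{oring} are not antisymmetric — the clean contradiction is that $0\prec -1$ together with $0\prec 1$ and (O4) would give $1\preceq 0$, or equivalently that $\pm 1$ would then lie in the support, violating (O1).
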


\noindent
Thus, an additional axiom is required that rules this kind of counterexample out.

\begin{definition} \label{qoring} Let $R$ be a commutative ring with $1$ and $\preceq$ be a binary, reflexive, transitive and total relation on $R.$ Then $(R,\preceq)$ is called a \textbf{quasi-ordered ring} if 
$\forall x,y,z \in R:$
\begin{itemize}
\item[(QR1)] $0 \prec 1,$
\item[(QR2)] $xy \preceq 0 \Rightarrow x \preceq 0 \lor y \preceq 0,$
\item[(QR3)] $x \preceq y, \; 0 \preceq z \Rightarrow xz \preceq yz,$
\item[(QR4)] $x \preceq y, \; z \nsim y \Rightarrow x+z \preceq y+z,$
\item[(QR5)] If $0 \prec z,$ then $xz \preceq yz \Rightarrow x \preceq y.$
\end{itemize}
Moreover, we denote the equivalence class of $0$ (w.r.t. $\sim$) by $E_0.$
\end{definition}

\noindent
Axiom (QR5) prevents the counterexample from above, because from $0 \prec Y$ and $XY \sim X^2Y$ now follows $X \sim X^2,$ contradicting the definition of $\preceq.$

\begin{remark} \label{QR2} Note that (QR5) single-handedly implies (QR2). Indeed, if $xy \preceq 0$ and $x$ is strictly positive, then (QR5) yields that $y \preceq 0.$ However, we decided not to remove this axiom 
in order to preserve the analogy to Definition \ref{oring}. \\ Further note that our counterexample above proves that (QR5) is strictly stronger than (QR2), even when given all the other axioms from Definition 
\ref{qoring}.  
\end{remark}

\noindent
First of all we give two important classes as examples for quasi-ordered rings, thereby establishing one implication of the dichotomy.

\begin{example} \label{dichoeasy} \hspace{7cm}
 \begin{enumerate}
  \item Let $(R,\leq)$ be an ordered ring with support $\mathfrak{p}.$  Then $(R,\preceq)$ is a quasi-ordered ring with $E_0 = \mathfrak{p}.$
  \item Let $(R,v)$ be a valued ring with support $\mathfrak{q}.$ Then $x \preceq y :\Leftrightarrow v(y) \leq v(x)$ defines a quasi-order on $R$ with $E_0=  \mathfrak{q}.$
 \end{enumerate}
\begin{proof} \hspace{7cm}
\begin{enumerate}
 \item Comparing the definitions of ordered and quasi-ordered rings, we only have to check axiom (QR5). Note that if $y-x < 0,$ then $(y-x)z < 0$ (recall that $\mathfrak{p}$ is a prime ideal), so $yz < xz,$ a 
contradiction. Therefore $0 \leq y-x,$ i.e. $x \leq y.$
 \item Clearly $\preceq$ is reflexive, transitive and total, since the order $\leq$ on the value group of $v$ has these properties. Further note that $v(1) = 0 < \infty = v(0),$ thus $0 \prec 1.$ This shows
that (QR1) is fulfilled. For (QR2) there is nothing to show by the previous remark. Next we establish (QR3). From $x \preceq y$ follows $v(y) \leq v(x).$ Hence, $$v(yz) = v(y) + v(z) \leq v(x) + v(z) = v(xz),$$ and 
therefore $xz \preceq yz.$ The proof of (QR4) is done by case distinction. We do the case $y \not\preceq z,$ the case $z \not\preceq y$ being similar. So let $y \not\preceq z.$ Then $v(z) \nleq v(y),$ i.e. 
$v(y) < v(z).$ Moreover $v(y) \leq v(x).$ Applying Lemma \ref{min} yields $$v(y+z) = v(y) \leq \min\{v(x),v(z)\} \leq v(x+z),$$ hence $x+z \preceq y+z.$ Last but not least, we have to verify (QR5).  Note that 
$0 \prec a$ means $v(a) < \infty.$ Thus, $v(ay) \leq v(ax)$ implies $v(y) \leq v(x),$ whereby $ax \preceq ay$ gives us $x \preceq y.$ Finally $x \in E_0$ if and only if $v(x) = v(0) = \infty,$ i.e. if and only if 
$x \in \textrm{supp}(v).$
\end{enumerate}
\end{proof}
\end{example}

\begin{notation}
For the rest of this note let $(R,\preceq)$ always be a quasi-ordered ring. The previous example reveals that given an order or a valuation on $R,$ their support coincides with $E_0$ for some suitable quasi-order
on $R.$ Our investigation will particularly unfold that the converse is also true, i.e. $E_0$ is either the support of an order or else the support of a valuation on $R.$ Therefore we refer to $E_0$ as the 
\textbf{support} of the quasi-order $\preceq.$ 
\end{notation}

\noindent
We continue this section by showing that $E_0$ is a prime ideal (see Proposition \ref{prime}). From this we get immediately that our definition of quasi-ordered rings indeed generalizes Definition \ref{qofield}. 
Moreover, this result is crucial for our proof of the dichotomy, as we want to extend the quasi-order $\preceq$ on $R$ to the quotient field $\textrm{Quot}(R/E_0)$ (see Proposition \ref{quot}).

\begin{lemma} \label{lem1} If $x \nsim 0$ and $y \sim 0,$ then $x+y \sim x.$ Particularly, if $y \sim 0,$ then $-y \sim 0.$ 
\begin{proof} Since $x \nsim 0$ and $y \sim 0,$ it holds $x \nsim y.$ So from $y \preceq 0$ we obtain $x+y \preceq x$ and from $0 \preceq y$ we obtain $x \preceq x+y$ via Axiom (QR4). \\ Now if $y \sim 0,$ then 
$-y \nsim 0$ would mean that $0 = (-y)+y \sim -y \nsim 0,$ a contradiction.
\end{proof}
\end{lemma}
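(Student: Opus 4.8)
The plan is to deduce the whole statement from axiom (QR4), which is the only axiom constraining the behaviour of $\preceq$ under addition. The first step is the auxiliary observation that $x \nsim y$: if instead $x \sim y$, then combining this with $y \sim 0$ and using transitivity of $\sim$ would give $x \sim 0$, contrary to hypothesis.

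Given this, I would apply (QR4) twice, in both cases with the element being added taken to be $x$ itself. Since $y \sim 0$ in particular gives $y \preceq 0$, and since $x \nsim 0$, axiom (QR4) (with $y$ in the role of the smaller element, $0$ in the role of the larger, and $x$ as the summand) yields $y + x \preceq 0 + x$, that is $x + y \preceq x$. Likewise, $y \sim 0$ gives $0 \preceq y$, and we have $x \nsim y$ from the first step, so (QR4) yields $0 + x \preceq y + x$, that is $x \preceq x + y$. Combining the two inequalities gives $x + y \sim x$, which is the main claim.

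For the final assertion, I would argue by contradiction: suppose $y \sim 0$ but $-y \nsim 0$. Then the part just proved applies with $-y$ in the role of $x$ (since $-y \nsim 0$) and $y$ in the role of $y$ (since $y \sim 0$), giving $(-y) + y \sim -y$, i.e. $0 \sim -y$. This contradicts $-y \nsim 0$, so in fact $-y \sim 0$.

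I do not expect a genuine obstacle here; the only point requiring care is inserting the hypotheses into the correct argument slots of (QR4) and checking the non-equivalence side condition (the "$z \nsim y$" clause in the statement of the axiom) before each of the two applications — everything else is a direct unwinding of the definitions of $\sim$ and $\prec$.
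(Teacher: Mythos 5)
Your proof is correct and follows essentially the same route as the paper's: establish $x \nsim y$ by transitivity of $\sim$, apply (QR4) twice with $x$ as the summand (checking the non-equivalence side condition each time) to get the two inequalities $x+y \preceq x$ and $x \preceq x+y$, and derive the last claim by applying the main claim to $-y$ in the role of $x$ to reach a contradiction.
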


\noindent
The next result is a consequence of axiom (QR5). As a matter of fact it would have also been possible to take it as the additional axiom to prevent the counterexample from the beginning of this section, and 
then to deduce (QR5) from it. We will use both versions throughout this note.

\begin{lemma} \label{QR5+} Let $x,y,z \in R.$ If $z \nsim 0,$ then $xz \sim yz \Rightarrow x \sim y.$
\end{lemma}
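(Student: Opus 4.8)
The plan is to reduce the statement to the situation where the multiplier is strictly positive --- there axiom (QR5) applies directly --- and to treat a strictly negative multiplier by passing to its negative. So first I would record the easy case: if $0 \prec z$, then $xz \sim yz$ means $xz \preceq yz$ and $yz \preceq xz$, and applying (QR5) to each of these inequalities yields $x \preceq y$ and $y \preceq x$, i.e. $x \sim y$.

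Next, suppose $z \nsim 0$ but $0 \not\prec z$; then $z \prec 0$ by totality, and I would first check $0 \prec -z$. Indeed $-z \nsim 0$ by Lemma \ref{lem1}, so (QR4) applied to $z \preceq 0$ (adding $-z$) gives $0 \preceq -z$, and $-z \sim 0$ would force $z \sim 0$ again by Lemma \ref{lem1}. Now $x(-z) = -(xz)$ and $y(-z) = -(yz)$, so once we know $-(xz) \sim -(yz)$ we are back in the first case with $-z$ in place of $z$, and we are done.

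Hence the crux is the auxiliary statement that $a \sim b \Rightarrow -a \sim -b$ for all $a,b \in R$. If $a \sim 0$ this is immediate from Lemma \ref{lem1}: then $b \sim 0$ as well, and the negatives of both lie in $E_0$. So assume $a \nsim 0$, whence also $b, -a, -b \nsim 0$ by Lemma \ref{lem1}. I expect this to be the main obstacle, because the natural move --- adding $-a$ (or $-b$) to both sides of $a \preceq b$ and $b \preceq a$ --- is only licensed by (QR4) when the element added is $\nsim$ to the relevant side. My plan is a short case distinction. If $-a \nsim a$, then also $-a \nsim b$ (otherwise $-a \sim b \sim a$), so (QR4) lets us add $-a$ to $a \preceq b$ and to $b \preceq a$, giving $0 \preceq b-a$ and $b-a \preceq 0$, hence $b-a \sim 0$; by Lemma \ref{lem1} also $a-b \sim 0$, and then $-b = (-a)+(a-b) \sim -a$, once more by Lemma \ref{lem1}. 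Since the hypothesis is symmetric in $a$ and $b$, the case $-b \nsim b$ is handled identically. Finally, if $-a \sim a$ and $-b \sim b$, then $-a \sim a \sim b \sim -b$ by transitivity. These cases are exhaustive, which proves the auxiliary statement and therefore the lemma.

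A remark on where the difficulty really sits: the content of (QR5) (equivalently Lemma \ref{QR5+}) is the cancellation that fails for the counterexample $\mathbb{R}[X,Y]$ of Proposition \ref{prop2}; the negative-multiplier bookkeeping above is routine, but the auxiliary ``$\sim$ is preserved by negation'' step genuinely needs the case analysis, since in the ``valued-like'' situation one cannot produce an element $\nsim a$ (or $\nsim b$) to feed into (QR4), and one must instead fall back on transitivity.
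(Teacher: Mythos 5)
Your proof is correct and takes essentially the same approach as the paper: reduce to the positive multiplier $-z$ so that (QR5) applies, then resolve the ``negation preserves $\sim$'' issue by a case analysis on whether relevant elements are $\sim$-equivalent to their own negatives. The only structural difference is that you isolate the claim $a \sim b \Rightarrow -a \sim -b$ as a standalone auxiliary and apply it to $(xz,yz)$ before invoking (QR5), whereas the paper gets $-x \sim -y$ directly from (QR5) via $(-x)(-z)=xz$ and then runs an equivalent contradiction-style case split ($x \nsim -x,-y$ versus $y \nsim -x,-y$) to pass from $-x \sim -y$ to $x \sim y$; both routes need, and correctly supply, the same underlying case analysis.
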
 
\begin{proof} For $0 \prec z,$ this is basically the same as (QR5). So suppose that $z \prec 0.$ By (QR4) and the previous lemma $0 \prec -z.$ Thus, (QR5) tells us $-x \sim -y.$ Assume for a contradiction that 
$x \nsim y,$ without loss of generality $x \prec y.$ By transitivity of $\preceq$ we get either $x \nsim -x,-y$ or $y \nsim -x,-y.$ If $y \nsim -x,-y,$ we obtain from $-x \preceq -y$ that $y-x \preceq 0.$ If 
$x \nsim 0,$ then (QR4) yields $y \preceq x.$ Otherwise, the same follows from Lemma \ref{lem1}. Hence, there is a contradiction anyway. So suppose that $x \nsim -x,-y.$ Then $-x \preceq -y$ implies 
$0 \preceq x-y,$ and $-y \preceq -x$ implies $x-y \preceq 0$ via (QR4). So $x-y \in E_0,$ but then also $y-x \in E_0$ by the previous lemma. Thus, $y-x \sim 0.$ From (QR4) (if $x \nsim 0),$ respectively Lemma 
\ref{lem1} (if $x \sim 0),$ we obtain $y \preceq x,$ again a contradiction.
\end{proof}



\begin{proposition} \label{prime} The support $E_0$ is a prime ideal of $R.$
\begin{proof} Let $x,y \in E_0.$ Assume for a contradiction that $x+y \notin E_0.$ So it holds $x+y \nsim 0,$ but $-y \sim 0$ by Lemma \ref{lem1}. The same lemma gives the contradiction
$0 \sim x = (x+y)-y \sim x+y \nsim 0.$ Now let $x \in E_0, r \in R.$ If $0 \preceq r,$ then $x \sim 0$ yields $xr \sim 0$ by (QR3), i.e. $xr \in E_0.$ If $r \prec 0,$ then $0 \prec -r.$ Moreover, $-x \sim 0$ 
(Lemma \ref{lem1}). Therefore $(-r)(-x) = rx \sim 0,$ so again $rx \in E_0.$ This proves that $E_0$ is an ideal, while axiom (QR1) states that $1 \notin E_0,$ i.e. $E_0 \neq R.$ Finally assume for a contradiction 
that $xy \in E_0,$ but $x,y \notin E_0.$ We may without loss of generality assume that $0 \prec x,$ for if $x,y \prec 0,$ then $0 \prec -x,-y.$ But if $0 \prec x$ and $xy \sim 0,$ then $y \sim 0$ by Lemma
\ref{QR5+}, a contradiction.
\end{proof}
\end{proposition}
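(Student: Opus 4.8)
The goal is to prove that the support $E_0$ of a quasi-ordered ring $(R,\preceq)$ is a prime ideal of $R$. I would structure the argument into the three standard steps for proving an ideal is prime: closure under addition, absorption of ring multiplication, properness, and finally primeness.

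First I would show $E_0$ is closed under addition. Given $x,y \in E_0$, suppose for contradiction that $x+y \nsim 0$. Since $y \sim 0$, Lemma \ref{lem1} gives $-y \sim 0$. Now apply Lemma \ref{lem1} again to the element $x+y \nsim 0$ and $-y \sim 0$: it yields $(x+y) + (-y) \sim x+y$, that is, $x \sim x+y$. But $x \sim 0$, so by transitivity $0 \sim x+y$, contradicting $x+y \nsim 0$. Hence $x+y \in E_0$.

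Next, for absorption, let $x \in E_0$ and $r \in R$. If $0 \preceq r$, then from $0 \preceq x \preceq 0$ and (QR3) we get $0 \preceq xr \preceq 0$, so $xr \sim 0$. If instead $r \prec 0$, then I would first argue $0 \prec -r$ (using (QR4)/Lemma \ref{lem1} as in the proof of Lemma \ref{QR5+}), note $-x \sim 0$ by Lemma \ref{lem1}, and apply (QR3) to the product $(-r)(-x) = rx$ to conclude $rx \sim 0$. Either way $xr \in E_0$, so $E_0$ is an ideal. Properness is immediate from (QR1): $0 \prec 1$ means $1 \nsim 0$, so $1 \notin E_0$ and $E_0 \neq R$.

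Finally, for primeness, suppose $xy \in E_0$ but $x,y \notin E_0$. By totality each of $x,y$ is either strictly positive or strictly negative; replacing $x$ by $-x$ and/or $y$ by $-y$ if necessary — which is legitimate since the ideal property just established shows $(-x)(-y) = xy$ stays in $E_0$, and Lemma \ref{lem1} shows sign-flipping preserves membership / non-membership in $E_0$ — we may assume $0 \prec x$. Then $xy \sim 0 = x \cdot 0$, and since $x \nsim 0$, Lemma \ref{QR5+} yields $y \sim 0$, contradicting $y \notin E_0$. Hence $E_0$ is prime. The only mildly delicate point is the sign-reduction in the last step and the handling of the $r \prec 0$ case in absorption, but both are already packaged into Lemmas \ref{lem1} and \ref{QR5+}, so no genuine obstacle remains.
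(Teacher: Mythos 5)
Your proof is correct and follows essentially the same route as the paper: the same use of Lemma \ref{lem1} for additive closure and for the sign-flips in the absorption step, (QR1) for properness, and Lemma \ref{QR5+} for primeness after reducing to $0 \prec x$. The extra justification you give for the sign-reduction in the primeness step (via the already-established ideal property) is a welcome clarification of a point the paper leaves terse, but it is not a different argument.
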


\begin{corollary} \label{prop1} Suppose that $R$ is a field. Then $(R,\preceq)$ is a quasi-ordered field.
\begin{proof} Proposition \ref{prime} states that $E_0$ is a prime ideal, so $E_0 = \{0\}$ and (Q1) is fulfilled. The axioms for $+$ and $\cdot$ are the same in the ring and the field case.
\end{proof}
\end{corollary}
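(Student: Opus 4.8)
The plan is to verify the defining conditions of a quasi-ordered field from Definition \ref{qofield} one by one. Since $(R,\preceq)$ is by hypothesis a quasi-ordered ring, the relation $\preceq$ is already binary, reflexive, transitive and total, so only the axioms (Q1)--(Q3) remain to be established.

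The substantive point is (Q1), and it is carried entirely by Proposition \ref{prime}: that result tells us $E_0$ is a prime ideal of $R$. As $R$ is a field, its only ideals are $\{0\}$ and $R$; axiom (QR1) gives $0 \prec 1$, hence $1 \nsim 0$, i.e. $1 \notin E_0$, which rules out $E_0 = R$. Therefore $E_0 = \{0\}$, and this is precisely the statement $x \sim 0 \Rightarrow x = 0$ required by (Q1). It is worth stressing that one really does need $E_0$ to be an ideal before collapsing it to $\{0\}$; the bare fact $1 \notin E_0$ would not suffice on its own.

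For (Q2) and (Q3) nothing new is needed: the implication $x \preceq y,\ 0 \preceq z \Rightarrow xz \preceq yz$ of (Q2) is verbatim axiom (QR3), and the implication $x \preceq y,\ z \nsim y \Rightarrow x+z \preceq y+z$ of (Q3) is verbatim axiom (QR4). Hence $(R,\preceq)$ satisfies (Q1)--(Q3) and is a quasi-ordered field. I expect no genuine obstacle in the argument: the whole content of the corollary lies in Proposition \ref{prime}, and once $E_0 = \{0\}$ is in hand the remaining axioms are a matter of unwinding definitions.
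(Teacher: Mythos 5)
Your proposal is correct and follows the paper's argument exactly: Proposition \ref{prime} gives that $E_0$ is a prime ideal, hence $E_0=\{0\}$ in a field, which is (Q1), while (Q2) and (Q3) are literally (QR3) and (QR4). You merely spell out the step $E_0=\{0\}$ (proper ideal of a field) in slightly more detail than the paper does.
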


\section{The dichotomy}
\noindent
As before, let $(R,\preceq)$ denote a quasi-ordered ring with support $E_0.$ In the first part of this section we carry out how $\preceq$ can be extended to a quasi-order $\trianglelefteq$ on 
$K:=\textrm{Quot}(R/E_0).$ Applying Fakhruddin's dichotomy to $K,$ we know that $\trianglelefteq$ is either an order, or else comes from a valuation on $K.$ In the second part we deduce that then $\preceq$ is
also an order, respectively induced by a valuation on $R.$

\begin{lemma} \label{residue} $(R/E_0, \preceq')$ is a quasi-ordered ring satisfying $\overline{x} \sim 0 \Leftrightarrow \overline{x} = 0,$ where $\overline{x} \preceq' \overline{y} :\Leftrightarrow x \preceq y.$
\begin{proof} Well-definedness of $\preceq'$ is obtained from Lemma \ref{lem1} and Proposition \ref{prime} as follows: if $x \preceq y$ and $c,d \in E_0,$ these results state that $x+c \sim x \preceq y \sim y+d.$
It is clear that $\preceq'$ is reflexive, transitive and total. Also the axioms (QR1) -- (QR5) are easily verified. Finally, 
$\overline{x} \sim 0 \Leftrightarrow x \sim 0 \Leftrightarrow x \in E_0 \Leftrightarrow \overline{x} = 0.$
\end{proof}
\end{lemma}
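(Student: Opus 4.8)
The plan is to verify each required property of $(R/E_0,\preceq')$ in turn, with the bulk of the work already pushed into the well-definedness check. First I would establish that $\preceq'$ is well-defined: given $x \preceq y$ and representatives $x+c$, $y+d$ with $c,d \in E_0$, I want $x+c \preceq y+d$. If $x \nsim 0$, Lemma \ref{lem1} gives $x+c \sim x$; if $x \sim 0$, then $x \in E_0$ and $x+c \in E_0$ so again $x+c \sim x$. Symmetrically $y+d \sim y$. Hence $x+c \sim x \preceq y \sim y+d$, and transitivity of $\preceq$ finishes it. (Here Proposition \ref{prime} is used only to know $E_0$ is an ideal, so that $R/E_0$ makes sense as a ring and the cosets $x+c$, $y+d$ are the legitimate alternative representatives.)

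Next I would record that reflexivity, transitivity and totality of $\preceq'$ are inherited directly from the corresponding properties of $\preceq$ on $R$, since $\overline{x} \preceq' \overline{y}$ is defined by a condition on arbitrary lifts and well-definedness makes the choice of lift irrelevant. Then I would run through (QR1)--(QR5): each is a first-order statement about $\overline{x},\overline{y},\overline{z}$ that unwinds to the same statement about chosen lifts $x,y,z$ in $R$, using the ring homomorphism $R \to R/E_0$ to translate products and sums (e.g. $\overline{x}\,\overline{y} = \overline{xy}$, $\overline{0} = \overline{0}$), plus the observation that $\overline{x} \sim' \overline{0} \Leftrightarrow x \sim 0$ and $\overline{x} \prec' \overline{0} \Leftrightarrow x \prec 0$. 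For instance (QR1): $0 \prec 1$ in $R$ gives $\overline{0} \prec' \overline{1}$, and $\overline{0} \neq \overline{1}$ since $1 \notin E_0$. (QR2), (QR3), (QR4), (QR5) are each a literal transcription. None of these needs more than the defining axioms of $(R,\preceq)$ together with the fact that $E_0$ is an ideal.

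Finally I would prove the displayed equivalence $\overline{x} \sim' 0 \Leftrightarrow \overline{x} = 0$: by definition $\overline{x} \sim' \overline{0}$ means $x \sim 0$, which by Proposition \ref{prime} (or just the definition of $E_0$) means $x \in E_0$, which is exactly $\overline{x} = \overline{0}$ in $R/E_0$. This is the quotient analogue of axiom (Q1), and it is the whole point of passing to $R/E_0$.

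I do not expect a genuine obstacle here: the only place where something could go wrong is the well-definedness step, and that is precisely where Lemma \ref{lem1} (closure of the "$\sim 0$" class under the relevant additive perturbations) does the work. Once well-definedness is in hand, everything else is bookkeeping, because the quotient map is a ring homomorphism and all axioms of Definition \ref{qoring} are universally quantified first-order sentences in the language of ordered rings, hence pull back and push forward along a surjection with no loss. So the proof is short; the reader should be pointed to Lemma \ref{lem1} and Proposition \ref{prime} as the two inputs and told that the axiom-by-axiom verification is routine.
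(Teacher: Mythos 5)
Your proposal is correct and follows the paper's proof essentially verbatim: well-definedness via Lemma \ref{lem1} together with Proposition \ref{prime}, the axioms (QR1)--(QR5) transferred routinely along the quotient map, and the final equivalence unwound from the definition of $E_0$. Your explicit case split on $x \sim 0$ versus $x \nsim 0$ in the well-definedness step is a small but welcome clarification of a detail the paper leaves implicit.
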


\begin{lemma} It holds $0 \preceq x^2$ for all $x \in R.$
\begin{proof} Simply observe that $0 \preceq x$ or $0 \preceq -x$ for all $x \in R.$ Applying (QR3) results in $0 \preceq x^2.$
\end{proof}

\end{lemma}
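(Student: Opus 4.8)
The plan is to obtain $0 \preceq x^2$ from totality of $\preceq$ together with a single application of axiom (QR3), after first checking that for each $x \in R$ at least one of $x, -x$ is $\preceq$-non-negative.

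First I would record the auxiliary fact that $x \preceq 0$ implies $0 \preceq -x$. If $x \sim 0$, this is immediate from Lemma \ref{lem1}, which gives $-x \sim 0$. If $x \prec 0$, then $x \nsim 0$, hence $-x \nsim 0$ (apply Lemma \ref{lem1} to $-x$: were $-x \sim 0$, then $x = -(-x) \sim 0$), so axiom (QR4) may be applied to $x \preceq 0$ with increment $-x$, yielding $0 = x + (-x) \preceq 0 + (-x) = -x$. Combining this with totality (either $0 \preceq x$ or $x \preceq 0$), for every $x \in R$ there is $w \in \{x, -x\}$ with $0 \preceq w$.

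Now fix such a $w$. Applying (QR3) with the instance $0 \preceq w$ in the role of the hypothesis $x \preceq y$, and the instance $0 \preceq w$ in the role of $0 \preceq z$, gives $0 \cdot w \preceq w \cdot w$, that is $0 \preceq w^2$. Since $w^2 = x^2$ in either case, this is precisely $0 \preceq x^2$.

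I do not anticipate any genuine obstacle: the substance is a single substitution into (QR3). The only step requiring a moment's care is the reduction $x \preceq 0 \Rightarrow 0 \preceq -x$, which is not itself one of the axioms of Definition \ref{qoring} and needs the small case distinction above, using Lemma \ref{lem1} (equivalently, the passage from $z \prec 0$ to $0 \prec -z$ already carried out inside the proof of Lemma \ref{QR5+}).
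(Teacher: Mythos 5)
Your proposal is correct and follows the paper's proof exactly: reduce to $0\preceq w$ for some $w\in\{x,-x\}$ via totality, then apply (QR3) once. The only difference is that you spell out why $x\preceq 0$ yields $0\preceq -x$ (the case split on $x\sim 0$ vs.\ $x\prec 0$ using Lemma \ref{lem1} and (QR4)), a step the paper leaves to the reader with the word ``observe.''
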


\begin{proposition} \label{quot} Suppose that $(R, \preceq)$ satisfies $E_0 = \{0\}.$ Then $(K,\trianglelefteq)$ is a quasi-ordered field, where $K:=\mathrm{Quot}(R)$ and
$$\frac{a}{b} \trianglelefteq \frac{x}{y} :\Leftrightarrow aby^2 \preceq xyb^2$$
extends $\preceq$ from $R$ to $K.$
\end{proposition}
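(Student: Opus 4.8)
The first task is to check that $\trianglelefteq$ is well defined on $K$. Here the main subtlety is that a fraction $\frac{a}{b}$ admits many representatives; if $\frac{a}{b}=\frac{a'}{b'}$, i.e. $ab'=a'b$, I must show $aby^2\preceq xyb^2 \Leftrightarrow a'b'y^2\preceq xyb'^2$. The natural device is axiom (QR5) together with Lemma \ref{QR5+}: multiply the inequality $aby^2\preceq xyb^2$ by the square $b'^2$ (which satisfies $0\preceq b'^2$ by the lemma just proved), use (QR3) to get $aby^2b'^2\preceq xyb^2b'^2$, rewrite $ab'=a'b$ so that $aby^2b'^2 = (ab')(a'b\text{-substitution})\cdots$ — more precisely $ab\,b'^2 = b^2(ab')b'/b = \ldots$; cleaner is to observe $ab\,b'^2 = a'b'\,b^2$ by substituting $ab'=a'b$ twice, so $aby^2b'^2 = a'b'y^2b^2$ and $xyb^2b'^2 = xyb'^2b^2$, giving $a'b'y^2b^2\preceq xyb'^2b^2$; since $b^2\nsim 0$ (because $b\neq 0$ in $R$ and $E_0=\{0\}$, and $E_0$ is prime), (QR5)/Lemma \ref{QR5+} cancels $b^2$ to yield $a'b'y^2\preceq xyb'^2$. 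The reverse direction is symmetric, and one should also remark that $b\neq 0$, $b'\neq 0$ keep all denominators out of $E_0$, so every cancellation is legitimate.

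Next I would verify that $\trianglelefteq$ restricted to (the image of) $R$ recovers $\preceq$: for $a,x\in R$, $\frac{a}{1}\trianglelefteq\frac{x}{1}$ unravels to $a\preceq x$ directly from the definition. Then I would check reflexivity, transitivity and totality of $\trianglelefteq$. Reflexivity and totality are immediate from those properties of $\preceq$. For transitivity, from $\frac{a}{b}\trianglelefteq\frac{c}{d}$ and $\frac{c}{d}\trianglelefteq\frac{x}{y}$, i.e. $abd^2\preceq cdb^2$ and $cdy^2\preceq xyd^2$, I multiply the first by $y^2$ and the second by $b^2$ (both $\succeq 0$), apply (QR3), and chain through a common term involving $cdb^2y^2$; transitivity of $\preceq$ then gives $abd^2y^2\preceq xyd^2b^2$, and cancelling $d^2\nsim 0$ via (QR5)/Lemma \ref{QR5+} yields $aby^2\preceq xyb^2$, as wanted. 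One must be slightly careful when intermediate terms are $\sim 0$; but since we can always reduce to strictly positive multipliers and use Lemma \ref{QR5+}, this is routine.

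It then remains to verify the axioms (Q1)--(Q3) of Definition \ref{qofield} for $(K,\trianglelefteq)$. Axiom (Q1), $\frac{a}{b}\sim 0\Rightarrow\frac{a}{b}=0$, follows because $\frac{a}{b}\sim 0$ means $ab\cdot 1\sim 0\cdot b^2=0$ in $R$, hence $ab\in E_0=\{0\}$, so $ab=0$ and (as $b\neq 0$, $R$ embeds in $K$, $b$ a nonzerodivisor modulo the prime $E_0=\{0\}$) $a=0$, i.e. $\frac{a}{b}=0$. For (Q2) and (Q3), the strategy is the same throughout: clear denominators, reduce the required inequality in $K$ to an inequality in $R$, multiply by suitable squares so that (QR3) applies, invoke the corresponding axiom (QR3) for (Q2) and (QR4) for (Q3) in $R$, and finally cancel the surplus square factors using (QR5) and Lemma \ref{QR5+}. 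For (Q3) one additionally needs the hypothesis $z\nsim y$ in $K$ to translate into the nonequivalence hypothesis of (QR4) in $R$ after clearing denominators; this translation, and keeping track of which products are $\sim 0$, is where the bookkeeping is heaviest. I expect this denominator-clearing bookkeeping — making sure every cancelled factor is genuinely $\nsim 0$ and that the (QR4) nonequivalence hypothesis survives the multiplication — to be the main obstacle, whereas each individual verification is mechanical once the pattern "multiply by squares, apply the ring axiom, cancel squares" is set up.
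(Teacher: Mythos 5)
Your proposal is correct and follows essentially the same route as the paper: define $\trianglelefteq$ by cross-multiplication with squares of denominators, establish well-definedness and transitivity by multiplying by nonzero squares and cancelling them via (QR5)/Lemma \ref{QR5+}, reduce (Q1) to $ab\in E_0=\{0\}$, and verify (Q2)--(Q3) by clearing denominators and invoking (QR3)/(QR4), with the nonequivalence hypothesis of (Q3) carried through by the contrapositive of Lemma \ref{QR5+} applied to the nonzero square $b^2q^2$. The details you defer as mechanical are exactly the computations the paper carries out, and the one genuinely delicate point (preserving $\nsim$ after multiplying by $b^2q^2$) is the one you correctly single out.
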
  
\begin{proof} At first we make sure that $\trianglelefteq$ is well-defined, i.e. that $ay = bx$ and $\frac{a}{b} \trianglelefteq \frac{p}{q}$ yields $\frac{x}{y} \trianglelefteq \frac{p}{q}$ 
(the proof for $\trianglerighteq$ is the same). Thus, we have to show that $ay = bx$ and $abq^2 \preceq pqb^2$ means $xyq^2 \preceq pqy^2.$ From $abq^2 \preceq pqb^2$ follows $abq^2y^2 \preceq pqb^2y^2.$ Using 
$ay = bx,$ we obtain $b^2q^2xy \preceq pqb^2y^2.$ Axiom (QR5) gives us $q^2xy \preceq pqy^2.$ \\
Reflexivity, transitivity and totality of $\trianglelefteq$ are clear, as well as the fact that $\trianglelefteq$ extends $\preceq.$ Next we check transitivity. So we have to verify that 
$$\frac{a}{b} \trianglelefteq \frac{x}{y} \textrm{ and } \frac{x}{y} \trianglelefteq \frac{p}{q} \Rightarrow \frac{a}{b} \trianglelefteq \frac{p}{q},$$
i.e. that $aby^2 \preceq xyb^2 \textrm{ and } xyq^2 \preceq pqy^2$ implies $abq^2 \preceq pqb^2.$ \\
The two conditions imply that $aby^2q^2 \preceq xyb^2q^2$ and $xyq^2b^2 \preceq pqy^2b^2.$ Transitivity of $\preceq$ yields $aby^2q^2 \preceq pqy^2b^2$ and Axiom (QR5) indeed tells us $abq^2 \preceq pqb^2.$ \\
It remains to establish the axioms (Q1) -- (Q3).
\begin{itemize}
\item[(Q1)] Show $\frac{x}{y} \sim 0$ implies $\frac{x}{y} = 0.$ Indeed, if $\frac{x}{y} \sim \frac{0}{1},$ then $xy \sim 0,$ so $xy = 0.$ Since $R$ is integer and $y \neq 0,$ evidently $x = 0,$ and 
therewith  $\frac{x}{y} = 0.$
\item[(Q2)] Show $$\frac{a}{b} \trianglelefteq \frac{x}{y}, \ 0 \trianglelefteq \frac{p}{q} \Rightarrow \frac{ap}{bq} \trianglelefteq \frac{xp}{yq}.$$ 
From the two conditions follows $aby^2 \preceq xyb^2$ and $0 \preceq pq.$ The latter implies $0 \preceq pq^3.$ Multiplying $pq^3$ on both sides of the inequation $aby^2 \preceq xyb^2$ proves the claim.  
\item[(Q3)] Show $$\frac{a}{b} \trianglelefteq \frac{x}{y} \textrm{ and } \frac{p}{q} \nsim \frac{x}{y} \Rightarrow \frac{aq+bp}{bq} \trianglelefteq \frac{xq+yp}{yq}.$$
So let $aby^2 \preceq xyb^2$ and $pqy^2 \nsim xyq^2.$ We have to prove that
\[
(aq+bp)bqy^2q^2 \preceq (xq+yp)yqb^2q^2,
\]
or equivalently
\[
aq^4by^2 + b^2pq^3y^2 \preceq xq^4yb^2 + py^2q^3b^2.
\]
From $aby^2 \preceq xyb^2$ follows $aby^2q^4 \preceq xyb^2q^4.$ If we show $py^2q^3b^2 \nsim xq^4yb^2,$ then we are done by axiom (QR4). But this is an immediate consequence of the assumption $pqy^2 \nsim xyq^2$ and
the contraposition of Lemma \ref{QR5+} applied to $b^2q^2 \neq 0.$  
\end{itemize}
\end{proof}

\begin{lemma} \label{val} If $\overline{v}: R/E_0 \to \Gamma \cup \{\infty\}$ is a valuation with $\textrm{supp}(\overline{v}) = \{0\},$ then $v: R \to \Gamma \cup \{\infty\},$ $v(x) := v(\overline{x}),$ is a 
valuation on $R$ with $\textrm{supp}(v) = E_0.$
\end{lemma}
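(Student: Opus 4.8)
The plan is straightforward: since $E_0$ is a prime ideal of $R$ by Proposition \ref{prime}, the canonical surjection $\pi\colon R \to R/E_0$, $x \mapsto \overline{x}$, is a ring homomorphism, and by construction $v = \overline{v} \circ \pi$. I would verify the valuation axioms (V1)--(V4) for $v$ one at a time, each time reducing to the corresponding axiom for $\overline{v}$ via the homomorphism property of $\pi$. Concretely, (V1) is $v(0) = \overline{v}(\overline{0}) = \overline{v}(0) = \infty$; (V2) is $v(1) = \overline{v}(\overline{1}) = 0$, where we note $\overline{1} \neq \overline{0}$ since $1 \notin E_0$ by (QR1), so that $\overline{v}(\overline{1})$ is indeed the neutral element of $\Gamma$; (V3) is $v(xy) = \overline{v}(\overline{x}\,\overline{y}) = \overline{v}(\overline{x}) + \overline{v}(\overline{y}) = v(x) + v(y)$; and (V4) is $v(x+y) = \overline{v}(\overline{x} + \overline{y}) \geq \min\{\overline{v}(\overline{x}), \overline{v}(\overline{y})\} = \min\{v(x), v(y)\}$.

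Next I would check the normalization convention built into the definition of a valuation, namely that $\Gamma$ is the group generated by $\{v(x) : x \notin v^{-1}(\infty)\}$. Since $\pi$ is onto, every element of $R/E_0$ is of the form $\overline{x}$ for some $x \in R$, and $x \notin v^{-1}(\infty) \Leftrightarrow \overline{v}(\overline{x}) \neq \infty \Leftrightarrow \overline{x} \notin \overline{v}^{-1}(\infty)$; hence $\{v(x) : x \notin v^{-1}(\infty)\} = \{\overline{v}(\overline{x}) : \overline{x} \notin \overline{v}^{-1}(\infty)\}$, which generates $\Gamma$ because $\overline{v}$ is a valuation on $R/E_0$. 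Finally, for the support I would compute $\mathrm{supp}(v) = v^{-1}(\infty) = \{x \in R : \overline{v}(\overline{x}) = \infty\} = \{x \in R : \overline{x} \in \mathrm{supp}(\overline{v})\} = \{x \in R : \overline{x} = \overline{0}\} = E_0$, using the hypothesis $\mathrm{supp}(\overline{v}) = \{0\}$.

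There is essentially no obstacle here: the statement is a pure pullback of a valuation along a ring quotient map. The only points requiring a word of care are that $\pi$ is legitimately a ring homomorphism --- which is precisely where Proposition \ref{prime} is used, as one needs $E_0$ to be an ideal --- and the small amount of bookkeeping around the value-group normalization and the computation of the support, both handled above.
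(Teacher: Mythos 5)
Your proposal is correct and follows essentially the same route as the paper: $v$ is the composition of $\overline{v}$ with the quotient map $R \to R/E_0$ (legitimate because $E_0$ is an ideal by Proposition \ref{prime}), each valuation axiom reduces to the corresponding axiom for $\overline{v}$, and the support computation is identical. You simply write out explicitly the verifications that the paper declares "easily verified," which is harmless.
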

\begin{proof} Note that $v(x) = v(x+c)$ for all $c \in \textrm{supp}(v),$ because either $x$ is in the support of $v,$ and then so is $x+c,$ or $x$ is not and then we get the result by Lemma \ref{min}. It is 
clear that the value sets of $v$ and $\overline{v}$ are the same. Moreover, note that
\[
x \in \textrm{supp}(v) \Leftrightarrow v(x) = \infty \Leftrightarrow \overline{v}(\overline{x}) = \infty \Leftrightarrow \overline{x} = 0 \Leftrightarrow x \in E_0.
\]
The axioms of a valuation are easily verified.
\end{proof}

\begin{lemma} \label{ord} If $\preceq'$ is an order on $R/E_0$ with support $\{0\},$ then $x \preceq y :\Leftrightarrow \overline{x} \preceq' \overline{y}$ defines an order on $R$ with support $E_0.$ 
\begin{proof} 
Well-definedness follows precisely as in the proof of Lemma \ref{residue}. It holds $x \sim 0 \Leftrightarrow \overline{x} \sim 0 \Leftrightarrow \overline{x} = 0 \Leftrightarrow x \in E_0.$ The verification of the 
axioms of an order is trivial, as $\preceq'$ is an order by assumption.
\end{proof}
\end{lemma}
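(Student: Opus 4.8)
The statement is Lemma \ref{ord}: if $\preceq'$ is an order on $R/E_0$ with support $\{0\}$, then $x \preceq y :\Leftrightarrow \overline{x} \preceq' \overline{y}$ defines an order on $R$ with support $E_0$. My plan is to verify first that $\preceq$ is well-defined, then check the four axioms (O1)--(O4) of Definition \ref{oring} by pulling each one back along the quotient map $R \to R/E_0$, and finally identify the support.

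\textbf{Well-definedness.} I would argue exactly as in the proof of Lemma \ref{residue}: if $\overline{x} = \overline{x'}$ and $\overline{y} = \overline{y'}$, then $x' = x + c$ and $y' = y + d$ for some $c, d \in E_0$, and by Lemma \ref{lem1} together with Proposition \ref{prime} we have $x' \sim x$ and $y' \sim y$ in $(R,\preceq)$; hence $\overline{x'} \sim \overline{x}$ and $\overline{y'} \sim \overline{y}$ with respect to $\sim_{\preceq'}$, so $\overline{x'} \preceq' \overline{y'} \Leftrightarrow \overline{x} \preceq' \overline{y}$. Actually, since we are \emph{defining} $\preceq$ on $R$ via $\preceq'$ on $R/E_0$ (not the other way around as in Lemma \ref{residue}), well-definedness is even more direct: the relation $x \preceq y :\Leftrightarrow \overline{x} \preceq' \overline{y}$ is simply the pullback of $\preceq'$ along the surjection $R \twoheadrightarrow R/E_0$, and any pullback of a relation along a function is automatically a well-defined relation. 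Reflexivity, transitivity, and totality of $\preceq$ are then inherited verbatim from those of $\preceq'$, since $\overline{x \cdot y} = \overline{x}\,\overline{y}$ and $\overline{x+y} = \overline{x} + \overline{y}$.

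\textbf{The order axioms.} For (O1), $0 \prec 1$ in $R$ means $\overline{0} \preceq' \overline{1}$ and $\overline{1} \not\preceq' \overline{0}$, i.e. $\overline{0} \prec' \overline{1}$, which holds because $\preceq'$ is an order and $\overline{1} \neq \overline{0}$ (as $1 \notin E_0$ by (QR1)). For (O2), suppose $xy \preceq 0$, i.e. $\overline{x}\,\overline{y} = \overline{xy} \preceq' \overline{0}$; since $\preceq'$ satisfies (O2) we get $\overline{x} \preceq' \overline{0}$ or $\overline{y} \preceq' \overline{0}$, i.e. $x \preceq 0$ or $y \preceq 0$. For (O3), if $x \preceq y$ and $0 \preceq z$, then $\overline{x} \preceq' \overline{y}$ and $\overline{0} \preceq' \overline{z}$, so $\overline{xz} = \overline{x}\,\overline{z} \preceq' \overline{y}\,\overline{z} = \overline{yz}$ by (O3) for $\preceq'$, giving $xz \preceq yz$. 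For (O4), if $x \preceq y$ then $\overline{x} \preceq' \overline{y}$, hence $\overline{x+z} = \overline{x} + \overline{z} \preceq' \overline{y} + \overline{z} = \overline{y+z}$ by (O4) for $\preceq'$ (which holds unconditionally for orders), so $x + z \preceq y + z$. Each axiom is a one-line pullback; none of these is a real obstacle.

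\textbf{The support.} Finally, $x \in E_0 \Leftrightarrow x \sim_{\preceq} 0$, and $x \sim_{\preceq} 0$ exactly when $\overline{x} \preceq' \overline{0}$ and $\overline{0} \preceq' \overline{x}$, i.e. $\overline{x} \sim_{\preceq'} \overline{0}$; since $\preceq'$ has support $\{0\}$ this is equivalent to $\overline{x} = \overline{0}$, i.e. $x \in E_0$. So the support of the order $\preceq$ on $R$, which by Corollary \ref{def2} equals $\{x \in R : x \sim 0\}$, is precisely $E_0$, as claimed. The only thing requiring any care here is keeping the two equivalence relations $\sim_{\preceq}$ and $\sim_{\preceq'}$ straight and confirming that $x \preceq y$ being the pullback of $\preceq'$ forces $\sim_{\preceq}$ to be the pullback of $\sim_{\preceq'}$ — but that is immediate from the definitions.
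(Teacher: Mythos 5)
Your proposal is correct and follows essentially the same route as the paper: pull $\preceq'$ back along $R\twoheadrightarrow R/E_0$, verify the order axioms coordinate-wise, and identify the support via $x\sim 0\Leftrightarrow\overline{x}\sim 0\Leftrightarrow\overline{x}=0\Leftrightarrow x\in E_0$. Your observation that well-definedness is automatic for a pullback (unlike the push-forward in Lemma \ref{residue}) is a small but accurate sharpening of the paper's remark.
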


\noindent
We can now prove the following analogue of Theorem \ref{qofielddicho}.

\begin{theorem} \label{dicho} A quasi-ordered ring $(R,\preceq)$ is either an ordered ring or else a valued ring $(R,v)$ such that $x \preceq y$ if and only if $v(y) \leq v(x)$ for all $x,y \in R.$ Moreover, the 
support of the quasi-order coincides with the support of this order, respectively with the support of this valuation.
\begin{proof} Lemma \ref{residue} yields that $(R/E_0, \preceq')$ is a quasi-ordered ring with support $\{0\}.$ Let $K := \textrm{Quot}(R/E_0).$ By Proposition \ref{quot} we know that $(K,\trianglelefteq)$ is
a quasi-ordered field such that $\trianglelefteq$ extends $\preceq'.$ So by exploiting Fakhruddin's dichotomy (Theorem \ref{qofielddicho}), we obtain that $\trianglelefteq$ either comes from a valuation on $K$ or 
else is an order on $K.$ Either way, the corresponding support is trivial by definition. \\
In the first case there exists a valuation $\overline{v'}: K \to \Gamma \cup \{\infty\}$ such that for all $\overline{x},\overline{y} \in K$ holds
$\overline{x} \trianglelefteq \overline{y} \Leftrightarrow \overline{v'}(\overline{y}) \leq \overline{v'}(\overline{x}).$
The restriction $\overline{v}$ of $\overline{v'}$ to $R/E_0$ is obviously a valuation on $R/E_0$ with support $\{0\}$ and the same value group such that
$ \overline{x} \preceq' \overline{y} \Leftrightarrow \overline{v}(\overline{y}) \leq \overline{v}(\overline{x})$ for all $\overline{x},\overline{y} \in R/E_0.$ Lemma \ref{val} implies that 
$v: R \to \Gamma \cup \{\infty\},$ $v(x) = \overline{v}(\overline{x}),$ is a valuation on $R$ with support $E_0.$ Moreover, by Lemma \ref{residue} and Lemma \ref{val},
\[
\forall x,y \in R: v(y) \leq v(x) \Leftrightarrow \overline{v}(\overline{y}) \leq \overline{v}(\overline{x}) \Leftrightarrow \overline{x} \preceq' \overline{y} \Leftrightarrow x \preceq y.
\]
If $\trianglelefteq$ is an order extending $\preceq',$ then $(R/E_0,\preceq')$ is already an ordered ring with support $\{0\}.$ Applying Lemma \ref{ord} yields that $(R,\preceq)$ is also an ordered ring with
support $E_0.$
\end{proof}
\end{theorem}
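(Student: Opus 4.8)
The plan is to reduce the ring statement to Fakhruddin's field dichotomy (Theorem \ref{qofielddicho}) by passing to the quotient field of $R/E_0$, then to transport the resulting structure back to $R$. All the technical work has been distributed among the preceding lemmas, so the proof of Theorem \ref{dicho} itself is essentially an assembly.

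First I would invoke Lemma \ref{residue} to replace $(R,\preceq)$ by $(R/E_0,\preceq')$, a quasi-ordered ring whose support is trivial, i.e.\ which satisfies $\overline{x}\sim 0\Leftrightarrow\overline{x}=0$. Next, since Proposition \ref{prime} tells us $E_0$ is prime, $R/E_0$ is an integral domain, so $K:=\mathrm{Quot}(R/E_0)$ is defined; Proposition \ref{quot} then upgrades $\preceq'$ to a quasi-order $\trianglelefteq$ on the \emph{field} $K$ extending $\preceq'$, and by Corollary \ref{prop1} (or directly Proposition \ref{quot}) $(K,\trianglelefteq)$ is a quasi-ordered field. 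Now Fakhruddin's Theorem \ref{qofielddicho} applies: either $\trianglelefteq$ is an order on $K$, or there is a valuation $\overline{v'}$ on $K$ with $\overline{x}\trianglelefteq\overline{y}\Leftrightarrow\overline{v'}(\overline{y})\leq\overline{v'}(\overline{x})$.

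In the valuation case, I would restrict $\overline{v'}$ to $R/E_0$ to get a valuation $\overline{v}$ there with trivial support and the same value group (the restriction satisfies (V1)--(V4) trivially, and $\overline{v}(\overline{x})=\infty$ forces $\overline{x}=0$ since $\trianglelefteq$ restricts to $\preceq'$ which has trivial support). Then Lemma \ref{val} produces a valuation $v$ on $R$, $v(x):=\overline{v}(\overline{x})$, with $\mathrm{supp}(v)=E_0$. Chaining the equivalences from Lemmas \ref{residue} and \ref{val} gives $v(y)\leq v(x)\Leftrightarrow x\preceq y$ for all $x,y\in R$, as required. In the order case, $\trianglelefteq$ being an order on $K$ that extends $\preceq'$ means $(R/E_0,\preceq')$ is already an ordered ring with support $\{0\}$ (the order axioms (O1)--(O4) for $\preceq'$ are inherited from those of $\trianglelefteq$ restricted to the subring $R/E_0$), so Lemma \ref{ord} hands back an order on $R$ with support $E_0$.

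I do not expect a genuine obstacle here, since the heavy lifting is done earlier; the one point demanding a little care is the transition from $K$ back down to $R/E_0$ in the valuation case, namely checking that the restriction of $\overline{v'}$ really does have support $\{0\}$ and induces exactly $\preceq'$ on $R/E_0$ — but this is immediate because $\trianglelefteq$ extends $\preceq'$ by construction in Proposition \ref{quot}, and $\preceq'$ has trivial support by Lemma \ref{residue}. A second, purely bookkeeping, point is the ``moreover'' clause about supports, which follows by reading off $x\in E_0\Leftrightarrow\overline{x}=0\Leftrightarrow x\in\mathrm{supp}(v)$ (resp.\ $x\in E_0\Leftrightarrow\overline{x}=0$ lies in the support of the order on $R/E_0$) from the same two lemmas.
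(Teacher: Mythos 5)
Your proposal is correct and follows essentially the same route as the paper: reduce to $(R/E_0,\preceq')$ via Lemma \ref{residue}, extend to $\mathrm{Quot}(R/E_0)$ via Proposition \ref{quot}, apply Fakhruddin's dichotomy, and transport the resulting order or valuation back to $R$ via Lemmas \ref{ord} and \ref{val}. The points you flag as needing care (trivial support of the restricted valuation, the support bookkeeping) are handled the same way in the paper's proof.
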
 

\noindent
Incorporating Example \ref{dichoeasy}, we can formulate the previous theorem also as follows:

\begin{theorem} \label{main}
Let $R$ be a commutative ring with $1$ and $\preceq$ a binary relation on $R.$ Then $(R, \preceq)$ is a quasi-ordered ring if and only if it is either an ordered ring or else there is a valuation $v$ on $R$ such 
that $x \preceq y \Leftrightarrow v(y) \leq v(x).$ Moreover, the support of the quasi-order coincides with the support of the order, respectively with the support of the valuation.
\end{theorem}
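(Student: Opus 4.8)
The plan is to derive Theorem \ref{main} essentially for free by combining the two directions that have already been prepared. The statement is an ``if and only if'', so I would split it accordingly. For the non-trivial direction, assume $(R,\preceq)$ is a quasi-ordered ring. Then Theorem \ref{dicho} applies verbatim: it tells us that $(R,\preceq)$ is either an ordered ring, or else there is a valuation $v$ on $R$ with $x \preceq y \Leftrightarrow v(y) \leq v(x)$, and moreover that the support $E_0$ of the quasi-order coincides with the support of whichever structure we land in. So this direction is nothing more than a citation of Theorem \ref{dicho}.

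For the converse direction, assume $(R,\preceq)$ is either an ordered ring or else of the form $x \preceq y \Leftrightarrow v(y) \leq v(x)$ for some valuation $v$ on $R$; I want to conclude that $(R,\preceq)$ is a quasi-ordered ring. This is exactly the content of Example \ref{dichoeasy}: part (1) handles the ordered-ring case and shows $E_0 = \mathfrak{p}$, and part (2) handles the valued-ring case and shows $E_0 = \mathfrak{q}$. In both cases the example also records that the support of $\preceq$ agrees with the support of the order, respectively the valuation, which is precisely the ``moreover'' clause in this direction.

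Putting the two halves together gives the biconditional, and the support statement holds on both sides by the support bookkeeping already done in Theorem \ref{dicho} and Example \ref{dichoeasy}. The one point that deserves an explicit word is consistency: a priori $(R,\preceq)$ could be \emph{both} an ordered ring and induced by a valuation, and one might worry the two resulting supports disagree; but since each of them equals $E_0$, they coincide, so there is no ambiguity. I do not expect any genuine obstacle here — all the real work was done in establishing Theorem \ref{dicho} (via Proposition \ref{quot} and Fakhruddin's Theorem \ref{qofielddicho}) and in verifying the axioms in Example \ref{dichoeasy}; Theorem \ref{main} is purely a repackaging of those results into a single clean equivalence, and the proof should be three or four sentences long.
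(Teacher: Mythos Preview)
Your proposal is correct and matches the paper's approach exactly: the paper presents Theorem \ref{main} simply as a repackaging of Theorem \ref{dicho} together with Example \ref{dichoeasy}, with no additional argument beyond the sentence ``Incorporating Example \ref{dichoeasy}, we can formulate the previous theorem also as follows.'' Your extra remark about the two possible supports both equaling $E_0$ is a harmless clarification, but otherwise there is nothing to add.
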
 \vspace{1mm}

\section*{Acknowledgement}
\noindent
The results presented here are part of my PhD. I am grateful to my supervisor Salma Kuhlmann for her dedicated support in general, and for carefully reading this note in particular. Her numerous comments led to a
significant improvement. I would also like to thank Tom-Lukas Kriel for helpful discussions on the subject. \vspace{1mm}

\vspace{6mm}

\textsc{Fachbereich Mathematik und Statistik, Universit\"at Konstanz},

78457 \textsc{Konstanz, Germany}, 

E-mail address: simon.2.mueller@uni-konstanz.de
\end{document}